\DeclareMathOperator\supp{supp}
\DeclareMathOperator\Gla{Gla}
\DeclareMathOperator\loc{loc}
 \newtheorem{thm}{Theorem}[section]
 \newtheorem{lem}[thm]{Lemma}
 \theoremstyle{definition}
 \newtheorem{defn}[thm]{Definition}
 \theoremstyle{remark}
 \newtheorem{rem}[thm]{Remark}
 \numberwithin{equation}{section}
\begin{document}

%
%
%
%
%
%
%
%
%

\title[Blow-up results for a system of semilinear damped wave equations]
 {Nonexistence of global solutions for a weakly coupled system of semilinear damped wave equations of derivative type in the scattering case}

\author[A. Palmieri]{Alessandro Palmieri}

\address{
Institute of Applied Analysis \\ Faculty for Mathematics and Computer Science \\ Technical University Bergakademie Freiberg\\
Pr\"{u}ferstra{\ss}e 9\\
 09596 Freiberg\\
Germany}

\email{alessandro.palmieri.math@gmail.com}

\author[H. Takamura]{Hiroyuki Takamura}
\address{Mathematical Institute \\ Tohoku University \\ Aoba \\ Sendai 980-8578 \\ Japan}
\email{hiroyuki.takamura.a1@tohoku.ac.jp}
\subjclass{Primary 35L71, 35B44; \\ Secondary   35G50, 35G55}

\keywords{Semilinear weakly coupled system; Semilinear terms of derivative type; Blow-up; Scattering producing damping; Critical curve; Slicing method}

\date{December 19, 2018}

\begin{abstract}
In this paper we consider the blow-up for solutions to a weakly coupled system of semilinear damped wave equations of derivative type in the scattering case. After introducing suitable functionals proposed by Lai-Takamura for the corresponding single semilinear equation, we employ Kato's lemma to derive the blow-up result in the subcritical case. On the other hand, in the critical case an iteration procedure based on the slicing method is employed. Let us point out that we find as critical curve in the $p$ - $q$ plane for the pair of exponents $(p,q)$ in the nonlinear terms the same one as for the weakly coupled system of semilinear not-damped wave equations with the same kind of nonlinearities.
\end{abstract}

\maketitle

\section{Introduction}

In this paper we consider a weakly coupled system of wave equations with time-dependent and scattering producing damping terms and with powers of the first order time-derivatives of components of the solution as nonlinear terms (semilinear term of \emph{derivative type}), namely, 
\begin{align}\label{weakly coupled system}
\begin{cases}
u_{tt}-\Delta u +b_1(t)u_t = |\partial_t v|^p,  & x\in \mathbb{R}^n, \ t>0,  \\
v_{tt}-\Delta v +b_2(t)v_t = |\partial_t u|^q,  & x\in \mathbb{R}^n, \ t>0, \\
 (u,u_t,v,v_t)(0,x)= (\varepsilon u_0, \varepsilon u_1, \varepsilon v_0, \varepsilon v_1)(x) & x\in \mathbb{R}^n,
\end{cases}
\end{align}
where $b_1,b_2\in \mathcal{C}([0,\infty))\cap L^1([0,\infty))$ are nonnegative functions, $\varepsilon$ is a positive parameter describing the size of initial data and $p,q>1$. We will prove blow-up results for \eqref{weakly coupled system} both in the subcritical case and in the critical case. Moreover, an upper bound for the lifespan of local solutions is derived in these two cases.

The nonexitence of global in time solutions in the case without damping terms (that is, for $b_1= b_2=0$) has been studied in \cite{Deng99,Xu04}, while the existence part has been proved in the three dimensional and radial case in \cite{KKS06}. Recently, in \cite[Section 8]{ISW18} the upper bound for the lifespan has been derived.

Summarizing the blow-up results above cited, for the weakly coupled system
\begin{align}\label{weakly coupled system classical wave}
\begin{cases}
u_{tt}-\Delta u  = |\partial_t v|^p,  & x\in \mathbb{R}^n, \ t>0,  \\
v_{tt}-\Delta v  = |\partial_t u|^q,  & x\in \mathbb{R}^n, \ t>0, \\
 (u,u_t,v,v_t)(0,x)= (\varepsilon u_0, \varepsilon u_1, \varepsilon v_0, \varepsilon v_1)(x) & x\in \mathbb{R}^n,
\end{cases}
\end{align} it holds the following result: under certain integral sign assumptions for the initial data, if $p,q>1$ satisfy 
\begin{align*}
\max\left\{\frac{p+1}{pq-1};\frac{q+1}{pq-1}\right\} \geqslant \frac{n-1}{2},
\end{align*} then, $(u,v)$ has to blow-up in finite time; moreover, the following upper bound estimate for the lifespan holds
\begin{align*} 
T(\varepsilon)\leqslant \begin{cases} C \varepsilon ^{-\max\{\Lambda(n,p,q),\Lambda(n,q,p)\}^{-1}} & \mbox{if} \ \ \Upsilon(n,p,q)>0, \\
\exp\big(C \varepsilon ^{-(pq-1)}\big) & \mbox{if} \ \ \Upsilon(n,p,q)=0, \, p \neq q,  \\
\exp\big(C \varepsilon ^{-(p-1)}\big) & \mbox{if} \ \ \Upsilon(n,p,q)=0, \, p=  q,\end{cases}
\end{align*} where
\begin{align}\label{def Lambda(n,p,q) function}
\Lambda (n,p,q)\doteq \frac{p+1}{pq-1}-\frac{n-1}{2}\, 
\end{align} and 
\begin{align} \label{def Upsilon(n,p,q) function}
\Upsilon(n,p,q)\doteq \max\{\Lambda(n,p,q),\Lambda(n,q,p)\}.
\end{align}

Let us stress that the study of the blow-up results for the system \eqref{weakly coupled system classical wave} is not a trivial generalization of the corresponding results related to Glassey's conjecture for the semilinear Cauchy problem
\begin{align}\label{semilinear Glassey problem}
\begin{cases}
u_{tt}-\Delta u  = |\partial_t u|^p,  & x\in \mathbb{R}^n, \ t>0, \\
 (u,u_t)(0,x)= (\varepsilon u_0, \varepsilon u_1)(x) & x\in \mathbb{R}^n.
\end{cases}
\end{align} Indeed, for the semilinear Cauchy problem \eqref{semilinear Glassey problem} it has been proved that the critical exponent is the so-called Glassey exponent 
\begin{align}\label{Glassey exponent def}
p_{\Gla}(n)\doteq \frac{n+1}{n-1}
\end{align}
 (in dimension $n=1$ there exist solutions that cannot be prolonged for all time for any exponent $p>1$ regardless of the smallness of initial data). 
 
 We refer to the classical results \cite{Joh81,Sid83,Mas83,Sch85,Ram87,Age91,HT95,Tzv98,Zhou01,HWY12} and references therein contained for further details.  
 
 We remark that the condition $p\leqslant p_{\Gla}(n)$ can be equivalently expressed as 
\begin{align} \label{equiv cond below glassey exponent}
\frac{1}{p-1}\geqslant \frac{n-1}{2}.
\end{align} Therefore, because of
\begin{align} \label{inequality}
\frac{\max\{p,q\}+1}{pq-1} \geqslant \frac{1}{\max\{p,q\}-1},
\end{align} where the equality holds if and only if $p=q$, it may happen that the condition for $(p,q)$, which implies the validity of a blow-up result, is satisfied even though one among $p,q$ is greater than the Glassey exponent (of course, in the case $p\neq q$). This fact follows immediately by \eqref{equiv cond below glassey exponent} and \eqref{inequality}.

Recently, semilinear wave equations with scattering producing damping terms have been studied in \cite{LT18Scatt,LT18Glass,LT18ComNon} in the case of single equations and in \cite{PalTak19,PalTak19mix} for weakly coupled systems with power nonlinearities and mixed nonlinearities, respectively.

In this work we will study blow-up results for the weakly coupled system \eqref{weakly coupled system} in the subcritical case and in the critical case by considering the blow-up dynamic of suitable functionals, that represent a generalization of the functional introduced in \cite{LT18Glass} in order to study the semilinear wave equation with damping in the scattering case related to Glassey's conjecture. 

The novelty of our results consists on the way in which methods, typically used for the semilinear classical wave equation with power nonlinearity of the solution itself, are suitably adapted to the study of \eqref{weakly coupled system}. More specifically, these methods are Kato's lemma (see \cite{Sid84,Yag05,Tak15,YZ06}) and the slicing method combined with an iteration argument (see \cite{AKT00,WakYor18}). These methods have been studied only for the classical semilinear wave equation with power nonlinearity $|u|^p$ and for the corresponding weakly coupled system. In this sense, it is surprising that they can be applied to study the weakly coupled system \eqref{weakly coupled system} with semilinear terms of derivative type.

Before stating the blow-up results of this paper, let us introduce a suitable notion of energy solutions.

\begin{defn} \label{def energ sol intro} Let $u_0,v_0\in H^1(\mathbb{R}^n)$ and $u_1,v_1\in L^2(\mathbb{R}^n)$.
We say that $(u,v)$ is an energy solution of \eqref{weakly coupled system} on $[0,T)$ if 
\begin{align*}
& u\in \mathcal{C}([0,T),H^1(\mathbb{R}^n))\cap \mathcal{C}^1([0,T),L^2(\mathbb{R}^n))\quad \mbox{and} \quad  \partial_t u\in L^q_{\loc}([0,T)\times\mathbb{R}^n), \\
& v\in \mathcal{C}([0,T),H^1(\mathbb{R}^n))\cap \mathcal{C}^1([0,T),L^2(\mathbb{R}^n)) \quad \mbox{and} \quad  \partial_t v\in  L^p_{\loc}([0,T)\times\mathbb{R}^n)
\end{align*}
satisfy $u(0,x)=\varepsilon u_0(x)$, $v(0,x)=\varepsilon v_0(x)$ in $H^1(\mathbb{R}^n)$ and the equalities
\begin{align} 
& \int_0^t \int_{\mathbb{R}^n}|\partial_tv(s,x)|^p\phi(s,x)\,dx \, ds  = \int_{\mathbb{R}^n}  \partial_t u(t,x)\phi(t,x)\,dx \notag \\
& \  -\int_{\mathbb{R}^n}\varepsilon u_1(x)\phi(0,x)\,dx - \int_0^t\int_{\mathbb{R}^n} \partial_t u(s,x)\phi_s(s,x) \,dx\, ds  \notag \\
& \ +\int_0^t\int_{\mathbb{R}^n}\nabla u(s,x)\cdot\nabla\phi(s,x)\, dx\, ds  +\int_0^t\int_{\mathbb{R}^n}b_1(s)\partial_t u(s,x) \phi(s,x)\,dx\, ds    \label{def u}
\end{align} and 
\begin{align} 
& \int_0^t \int_{\mathbb{R}^n}|\partial_t u(s,x)|^q\psi(s,x)\,dx \, ds = \int_{\mathbb{R}^n}  \partial_t v(t,x)\psi(t,x)\,dx \notag \\
& \ -\int_{\mathbb{R}^n}\varepsilon v_1(x)\psi(0,x)\,dx - \int_0^t\int_{\mathbb{R}^n} \partial_t v(s,x)\psi_s(s,x) \,dx\, ds   \notag \\
& \ +\int_0^t\int_{\mathbb{R}^n}\nabla v(s,x)\cdot\nabla\psi(s,x)\, dx\, ds+\int_0^t\int_{\mathbb{R}^n} b_2(s) \partial_t v(s,x)\psi(s,x)\,dx\, ds   \label{def v}
\end{align}
for any test functions $\phi,\psi \in \mathcal{C}_0^\infty([0,T)\times\mathbb{R}^n)$ and any $t\in [0,T)$.
\end{defn}

 Performing a further step of integrations by parts in \eqref{def u}, \eqref{def v} and letting $t\rightarrow T$, we find that $(u,v)$ fulfills the definition of weak solution to \eqref{weakly coupled system}.
 
Let us state the main blow-up result for \eqref{weakly coupled system} of this paper.

\begin{thm}\label{Thm blowup |vt|^p, |ut|^q} Let $b_1,b_2$ be continuous, nonnegative and summable functions. Let us consider $p,q>1$ satisfying 
\begin{align}\label{critical exponent wave like case system}
\max\left\{\frac{p+1}{pq-1},\frac{q+1}{pq-1}\right\}\geqslant \frac{n-1}{2}\,.
\end{align} 

Assume that $u_0,v_0\in H^1(\mathbb{R}^n)$ and $u_1,v_1\in  L^2(\mathbb{R}^n)$ are nonnegative and compactly supported in $B_R$ 
functions such that $u_1,v_1\not \equiv 0$. 


Let $(u,v)$ be an energy solution of \eqref{weakly coupled system} with lifespan $T=T(\varepsilon)$ according to Definition \ref{def energ sol intro}, satisfying
\begin{align}\label{support property solution}
\supp u, \, \supp v \subset \{ (t,x)\in [0,T)\times \mathbb{R}^n: |x|\leqslant t+R\}.
\end{align}
 Then, there exists a positive constant $\varepsilon_0=\varepsilon_0(u_0,u_1,v_0,v_1,n,p,q,b_1,b_2,R)$ such that for any $\varepsilon\in (0,\varepsilon_0]$ the solution $(u,v)$ blows up in finite time. Moreover,
 the upper bound estimate for the lifespan
\begin{align} \label{lifespan upper bound estimate}
T(\varepsilon)\leqslant \begin{cases} C \varepsilon ^{-\max\{\Lambda(n,p,q),\Lambda(n,q,p)\}^{-1}} & \mbox{if} \ \ \Upsilon (n,p,q)>0, \\
\exp\big(C \varepsilon ^{-(pq-1)}\big) & \mbox{if} \ \ \Upsilon(n,p,q)=0, \, p \neq q,  \\
\exp\big(C \varepsilon ^{-(p-1)}\big) & \mbox{if} \ \ \Upsilon(n,p,q)=0, \, p=  q,\end{cases}
\end{align} holds, where C is an independent of $\varepsilon$, positive constant and $\Lambda (n,p,q)$ and $\Upsilon(n,p,q)$ are defined by \eqref{def Lambda(n,p,q) function} and \eqref{def Upsilon(n,p,q) function}.
\end{thm}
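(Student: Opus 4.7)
The plan is to adapt the Lai--Takamura functional approach \cite{LT18Glass} (originally designed for the single-equation analogue of \eqref{weakly coupled system}) by introducing a pair of coupled positive functionals encoding weighted time-derivatives of $u$ and $v$, deriving coupled integral inequalities, and concluding with Kato's lemma in the subcritical regime and with a slicing iteration in the critical one. The scattering assumption $b_1,b_2 \in L^1$ enters only mildly: the primitives $B_i(t)\doteq\int_0^t b_i(\tau)\,d\tau$ are uniformly bounded, so any factor $e^{B_i(t)}$ is absorbed into a universal constant.

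With the Yordanov--Zhang weight $\phi(t,x)\doteq e^{-t}\Phi(x)$, where $\Phi\in\mathcal{C}^\infty(\mathbb{R}^n)$ is positive, radial, satisfies $\Delta\Phi=\Phi$ and $\Phi(x)\sim |x|^{-(n-1)/2}e^{|x|}$ as $|x|\to\infty$, I would set
\begin{equation*}
F_1(t)\doteq \int_{\mathbb{R}^n} \partial_t u(t,x)\phi(t,x)\,dx,\quad F_2(t)\doteq \int_{\mathbb{R}^n}\partial_t v(t,x)\phi(t,x)\,dx,
\end{equation*}
together with the auxiliary companions $G_1(t)\doteq\int u\,\phi\,dx$ and $G_2(t)\doteq\int v\,\phi\,dx$. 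A short computation using \eqref{weakly coupled system}, $\Delta\Phi=\Phi$, and the support property \eqref{support property solution} (to kill boundary terms when integrating by parts in space) produces a closed first-order ODE-type system for $(F_i,G_i)$ driven by the sources $\int |\partial_t v|^p\phi\,dx$ and $\int |\partial_t u|^q\phi\,dx$, with bounded damping contributions. Jensen's inequality on the probability measure $\phi\,dx/\int_{B_{t+R}}\phi\,dx$, combined with the key estimate $\int_{|x|\leqslant t+R}\phi(t,x)\,dx\lesssim (1+t)^{(n-1)/2}$ (in which the $e^{-t}$ factor exactly cancels the exponential growth of $\Phi$), yields
\begin{equation*}
\int |\partial_t v|^p\phi\,dx\geqslant C(1+t)^{-(n-1)(p-1)/2}|F_2(t)|^p,
\end{equation*}
and symmetrically for the $q$-integral. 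Nonnegativity of the data, combined with a Duhamel-type argument, gives preliminary lower bounds $F_i(t)\gtrsim \varepsilon$, and iterating the ODE system produces the coupled integral inequalities
\begin{equation*}
F_1(t)\geqslant C\varepsilon+C\int_0^t(t-s)(1+s)^{-(n-1)(p-1)/2}F_2(s)^p\,ds
\end{equation*}
and its $(p,u)\leftrightarrow(q,v)$ analogue.

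For $\Upsilon(n,p,q)>0$, substituting each inequality into the other produces a single self-iterated lower bound for $F_1$ with effective nonlinearity exponent $pq$; an application of Kato's lemma in the form of \cite{Tak15,Yag05} then yields blow-up in finite time together with the polynomial lifespan estimate $T(\varepsilon)\leqslant C\varepsilon^{-1/\max\{\Lambda(n,p,q),\Lambda(n,q,p)\}}$, the two branches in the maximum corresponding to whether one iterates first the $F_1$ inequality or first the $F_2$ one. For $\Upsilon(n,p,q)=0$, where Kato's lemma loses its efficacy, I would employ the slicing method of \cite{AKT00,WakYor18}: choosing a sequence of slicing parameters $\ell_j=2-2^{-j}$ and iterating the integral inequalities on the shrinking intervals $[\ell_j R,\ell_{j+1}R]$ produces lower bounds for $F_1$ of the form $C_j(\log t)^{a_j}$ with parameters obeying a controllable recursion in $j$; letting $j\to\infty$ and comparing with the assumed finiteness of the solution then forces the exponential lifespan bounds in \eqref{lifespan upper bound estimate}, the dichotomy $p\neq q$ versus $p=q$ reflecting whether the effective recursion involves the product $pq$ or only $p$.

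The main obstacle will be the slicing iteration in the critical regime. Because the nonlinearity is of \emph{derivative} type rather than of power type in $u,v$ themselves, the weight $\phi$ must be positioned so that the exponential growth of $\Phi$ is cancelled exactly by the prefactor $e^{-t}$, leaving only the polynomial loss that eventually produces the correct critical curve. Maintaining sharpness of all constants through the infinite slicing, in order to recover \emph{exactly} the critical condition $\max\{(p+1)/(pq-1),(q+1)/(pq-1)\}=(n-1)/2$ and the sharp exponential lifespan bounds in $\varepsilon$, is particularly delicate here because the coupling of the two equations prevents a direct application of the single-equation slicing scheme used in \cite{LT18Glass}.
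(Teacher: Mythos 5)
Your overall architecture coincides with the paper's: the weight $\Psi=e^{-t}\Phi(x)$ with $\Delta\Phi=\Phi$, functionals built on $\int_{\mathbb{R}^n}\partial_t u\,\Psi\,dx$ and $\int_{\mathbb{R}^n}\partial_t v\,\Psi\,dx$ with the damping absorbed through the bounded multipliers $m_j(t)=\exp\big(-\int_t^\infty b_j(\tau)\,d\tau\big)$, the H\"older/Jensen step via $\int_{B_{t+R}}\Psi\,dx\lesssim(1+t)^{(n-1)/2}$, preliminary lower bounds $\gtrsim\varepsilon$ from positivity of the data, Kato's lemma in the subcritical regime and slicing in the critical one. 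One cosmetic correction: the $p\neq q$ versus $p=q$ dichotomy is not about whether the recursion involves $pq$ --- it always does, $a_{j+1}=pq\,a_j+\mathrm{const}$ --- but about the additive constant ($1$ versus $p+1$), which turns $a_j=\frac{(pq)^j-1}{pq-1}$ into $\frac{(pq)^j-1}{p-1}$ and hence changes the exponent in the exponential lifespan bound from $pq-1$ to $p-1$.

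The genuine problem is the kernel $(t-s)$ in your iteration frame. For a derivative-type nonlinearity tested against $\Psi$, using $\Psi_t=-\Psi$ and $\Delta\Psi=\Psi$ one obtains (after the multiplier trick and a positivity argument for the auxiliary functionals) a \emph{first-order} frame, $F_1(t)\geqslant C\varepsilon+C\int_0^t(1+s)^{-\frac{n-1}{2}(p-1)}|F_2(s)|^p\,ds$, with no $(t-s)$; the kernel $(t-s)$ belongs to power-type nonlinearities, where one works with $\int u\,dx$ and a genuinely second-order ODE. Your frame is not derivable for $F_1=\int\partial_t u\,\Psi\,dx$, and the error is not harmless: each loop of the iteration would gain $1+p$ spurious powers of $t$, so on the curve $\frac{p+1}{pq-1}=\frac{n-1}{2}$ the exponent of $(1+s)$ in the composed inequality would be $p>0$ rather than the critical value $-1$, the bookkeeping would land on the critical curve of the power-type system, and you would ``prove'' blow-up in ranges of $(p,q)$ where global existence is known. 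The correct way to reach a second-order inequality for Kato's lemma is to integrate the first-order frame once more, i.e.\ apply the lemma to $H(t)=\int_0^tF_1(\tau)\,d\tau$, accepting the additional loss $(1+t)^{-p(q-1)}$ that comes from H\"older in time when bounding $\int_0^tF_1^q$ from below by $t^{-(q-1)}\big(\int_0^tF_1\big)^q$; with $a=1$, $r=pq$ and $b=\frac{n-1}{2}(pq-1)+p(q-1)$ this reproduces exactly the exponent $\Lambda(n,p,q)$ in the subcritical lifespan bound.
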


\begin{rem} The upper bound estimates in \eqref{lifespan upper bound estimate} for the lifespan of the solution coincide with the ones for the case  $b_1=b_2=0$ as we recalled in the historical overview in the first part of this introduction.
\end{rem}

The remaining part of this paper is organized as follows: in Section \ref{Section Functionals F and G} we recall the definition of a multiplier, that has been introduced in \cite{LT18Scatt} in order to study the corresponding single semilinear wave equation with power nonlinearity, and its properties; moreover, following \cite{LT18Glass} we introduce a suitable pair of functionals related to a local solution of \eqref{weakly coupled system} and we determine certain lower bounds for these functionals; then, in Section \ref{Section |vt|^p, |ut|^q subcrical case} we prove Theorem \ref{Thm blowup |vt|^p, |ut|^q} in the subcritical case  by using a Kato's type lemma. Finally, in Section \ref{Section |vt|^p, |ut|^q crical case} we modify the approach in the critical case employing an iteration argument together with the slicing method.

\subsection*{Notations} Throughout this paper we will use the following notations: $B_R$ denotes the ball around the origin with radius $R$; $f \lesssim g$ means that there exists a positive constant $C$ such that $f \leqslant Cg$ and, analogously, for $f\gtrsim g$; 
finally, as in the introduction, $p_{\Gla}(n)$ denotes the Glassey exponent, whose definition is given in \eqref{Glassey exponent def}. 

\section{Definition of the functionals and derivation of the iteration frame}
\label{Section Functionals F and G}

Let us recall the definition of some multipliers related to our model, which have been introduced in \cite{LT18Scatt}, and some of their properties as well, that will be useful throughout this paper.
 
\begin{defn} Let $b_1,b_2\in \mathcal{C}([0,\infty))\cap L^1([0,\infty))$ be the nonnegative, time-dependent coefficients in \eqref{weakly coupled system}. We define the corresponding \emph{multipliers}
\begin{align*}
m_j(t)\doteq \exp \bigg(-\int_t^\infty b_j(\tau) d\tau \bigg) \qquad \mbox{for} \ \  t\geqslant 0 \ \ \mbox{and} \ j=1,2. 
\end{align*}
\end{defn}

Due to the nonnegativity of $b_1,b_2$, it follows the monotonicity of $m_1,m_2$. Furthermore, as these coefficients are summable, we get also that these multipliers are bounded and 
\begin{align}
m_j(0)\leqslant m_j(t) \leqslant 1 \qquad \mbox{for} \ \  t\geqslant 0 \ \ \mbox{and} \ j=1,2. \label{boundedness multipliers}
\end{align}

An important property of these multipliers is the relation with the corresponding derivatives. More precisely,
\begin{align}
m_j'(t) = b_j(t) \, m(t)  \qquad \mbox{for} \ \   j=1,2. \label{derivative multiplier}
\end{align} The properties described by \eqref{boundedness multipliers} and \eqref{derivative multiplier}  play a crucial role in the remaining part of this section, which  is devoted to the introduction of a pair of functionals, whose dynamic is investigated in the proof of Theorem \ref{Thm blowup |vt|^p, |ut|^q}. This kind of functionals have been considered for a single semilinear wave equation of derivative type with a scattering-producing damping in \cite{LT18Glass}.

However, before introducing the above quoted functionals, we need to derive suitable lower bound estimates for a different pair of functionals related to a local solution $(u,v)$ of \eqref{weakly coupled system}. Thus, we introduce the functionals
\begin{align}
U_1(t)\doteq \int_{\mathbb{R}^n}u(t,x)\Psi(t,x)\, dx  \quad  \mbox{and} \quad V_1(t)\doteq \int_{\mathbb{R}^n}v(t,x)\Psi(t,x)\, dx , \label{def U1 and V1}
\end{align}
where $\Psi=\Psi(t,x)\doteq e^{-t} \Phi(x)$ and 
\begin{align}
\Phi=\Phi(x)\doteq \begin{cases} e^{x}+e^{-x} & \mbox{for} \ \ n=1, \\\displaystyle{\int_{\mathbb{S}^{n-1}} \, e^{\omega \cdot x}\, dS_{\omega}} & \mbox{for} \ \ n\geqslant 2\end{cases} \label{def eigenfunction laplace op}
\end{align} is an eigenfunction of the Laplace operator, as $\Delta \Phi =\Phi$. In order to derive lower bounds for $U_1,V_1$ we prove a result, which is valid even when we consider more general nonnegative nonlinearities.


\begin{lem} \label{lemma U1,V1} Let $(w,\widetilde{w})$ be a local energy solution of the Cauchy problem
\begin{align*}
\begin{cases}
w_{tt}-\Delta w +b_1(t)w_t = G_1(t,x,w,w_t,\widetilde{w},\widetilde{w}_t),  & x\in \mathbb{R}^n, \ t\in (0,T),  \\
\widetilde{w}_{tt}-\Delta \widetilde{w} +b_2(t) \widetilde{w}_t = G_2(t,x,w,w_t,\widetilde{w},\widetilde{w}_t),  & x\in \mathbb{R}^n, \ t\in (0,T), \\
 (w,w_t,\widetilde{w},\widetilde{w}_t)(0,x)= (\varepsilon w_0, \varepsilon w_1, \varepsilon \widetilde{w}_0, \varepsilon \widetilde{w}_1)(x), & x\in \mathbb{R}^n,
\end{cases}
\end{align*} where the coefficients of the damping terms $b_1,b_2\in \mathcal{C}([0,\infty))\cap L^1([0,\infty))$ and the nonlinear terms $G_1,G_2$ are nonnegative. Furthermore, we assume that $w_0,w_1,\widetilde{w}_0,\widetilde{w}_1$ are nonnegative, pairwise nontrivial and compactly supported and that $w,\widetilde{w}$ satisfy a support condition as in \eqref{support property solution}.
 Let $W_1,\widetilde{W}_1$ be defined by $$ W_1(t) \doteq \int_{\mathbb{R}^n}w(t,x)\Psi(t,x)\, dx  \quad \mbox{and} \quad
\widetilde{W}_1(t) \doteq \int_{\mathbb{R}^n}\widetilde{w}(t,x)\Psi(t,x)\, dx  $$ for any $t\geqslant 0$. Then, for any $t\geqslant 0$ the following estimates hold
\begin{align*}
W_1(t) & \geqslant \varepsilon \,\frac{m_1(0)}{2}  \int_{\mathbb{R}^n}w_0(x) \Phi(x) \, dx, \\ 
\widetilde{W}_1(t)  & \geqslant \varepsilon \,\frac{m_2(0)}{2} \int_{\mathbb{R}^n}\widetilde{w}_0(x) \Phi(x) \, dx. 
\end{align*}
\end{lem}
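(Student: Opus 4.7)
The plan is to reduce each of the two estimates to a scalar ODE argument for the spatial average $F(t)\doteq\int_{\mathbb{R}^n} w(t,x)\Phi(x)\,dx$, noting that $W_1(t)=e^{-t}F(t)$ and that the corresponding statement for $\widetilde W_1$ is entirely symmetric upon swapping $w\leftrightarrow\widetilde w$ and $b_1\leftrightarrow b_2$; I will treat only $W_1$. Testing the first equation against the positive eigenfunction $\Phi$, integrating in $x$, and exploiting the support condition \eqref{support property solution} to move $\Delta$ onto $\Phi$ with no boundary contribution, the identity $\Delta\Phi=\Phi$ yields
\begin{equation*}
F''(t)+b_1(t)\,F'(t)-F(t)=\int_{\mathbb{R}^n} G_1(s,\cdot)\,\Phi\,dx\ \ge\ 0,\qquad F(0),\ F'(0)\ge 0.
\end{equation*}
A preliminary observation I would invoke is the pointwise sign property $w\ge 0$, obtained by comparing $w$ with the zero solution of the linear damped wave equation (whose Riemann function is a positive measure on the forward cone) using that the data, the coefficient $b_1$ and the source $G_1$ are all nonnegative; this propagates to $F\ge 0$ throughout $[0,T)$.

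The heart of the argument is to consider the auxiliary functional
\begin{equation*}
J(t)\doteq m_1(t)\,e^{-t}\bigl(F(t)+F'(t)\bigr).
\end{equation*}
Using the key identity $m_1'=b_1 m_1$ and differentiating, the damping produced by $b_1F'$ and the term $-F$ in the ODE cancel exactly against the contributions of $(m_1 e^{-t})'=(b_1-1)m_1 e^{-t}$, collapsing the computation to
\begin{equation*}
J'(t)=m_1(t)\,e^{-t}\Bigl(\int_{\mathbb{R}^n} G_1\,\Phi\,dx+b_1(t)\,F(t)\Bigr)\ \ge\ 0.
\end{equation*}
Hence $J$ is nondecreasing on $[0,T)$; combined with $m_1(t)\le 1$ by \eqref{boundedness multipliers} and with $F(0)+F'(0)\ge F(0)=\varepsilon\int w_0\Phi\,dx$ (since $w_1\ge 0$), this gives
\begin{equation*}
F(t)+F'(t)\ \ge\ m_1(0)\,e^{t}\,\varepsilon\!\int_{\mathbb{R}^n} w_0(x)\Phi(x)\,dx.
\end{equation*}

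To conclude, I rewrite the left-hand side as $e^{-t}(e^t F(t))'$, integrate from $0$ to $t$, and divide by $e^{2t}$, obtaining
\begin{equation*}
W_1(t)\ \ge\ \Bigl(e^{-2t}+\tfrac{m_1(0)}{2}\,(1-e^{-2t})\Bigr)\,\varepsilon\!\int_{\mathbb{R}^n} w_0(x)\Phi(x)\,dx.
\end{equation*}
Since $m_1(0)\le 1$ implies $1-m_1(0)/2\ge 1/2>0$, the bracket is bounded below by $m_1(0)/2$ uniformly in $t\ge 0$, which is exactly the desired estimate. The only genuinely delicate step is the positivity $F\ge 0$ needed to sign $J'$; everything else is routine ODE bookkeeping driven by the purpose-built cancellation $m_1'=b_1 m_1$, which is precisely why the multiplier $m_1$ was introduced.
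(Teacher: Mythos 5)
Your functional $J(t)=m_1(t)e^{-t}(F(t)+F'(t))$ is exactly the paper's key quantity $m_1(t)\bigl(W_1'(t)+2W_1(t)\bigr)$, and your computation of $J'$ and the final integration are correct. The genuine gap is the step you yourself flag as delicate: the claim that $w\geqslant 0$ pointwise because ``the Riemann function of the linear damped wave equation is a positive measure on the forward cone.'' This is false in general: even for the free wave equation the forward fundamental solution is a positive measure only for $n\leqslant 3$; for $n\geqslant 4$ it involves derivatives of surface measures on the light cone, and nonnegative data and sources do not yield nonnegative solutions. For a variable, time-dependent damping $b_1(t)$ no such positivity is available either. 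Since the lemma carries no restriction on $n$, your argument for $F\geqslant 0$ --- which is precisely what signs the term $b_1F$ in $J'$ --- does not go through.

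The good news is that pointwise positivity of $w$ is not needed: positivity of the \emph{functional} $F$ (equivalently $W_1$) can be extracted from the ODE inequality itself, and this is what the paper does. The apparent circularity ($F\geqslant 0$ is needed to get $J'\geqslant 0$, which in turn is used to control $F$) is broken by a continuity/first-zero argument: if $t_0$ is the first zero of $F$ (assuming, say, $F(0)>0$), then $F\geqslant 0$ on $[0,t_0]$, hence $J'\geqslant 0$ there, hence $J\geqslant J(0)>0$ on $[0,t_0]$, hence $(e^tF)'=e^t(F+F')>0$ on $[0,t_0]$, contradicting $F(t_0)=0$; the case $F(0)=0$, $F'(0)>0$ requires a slightly longer but analogous argument with the differential inequality. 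You should replace your positivity paragraph with this bootstrap. A secondary, more technical point: an energy solution only gives $F\in\mathcal{C}^1$, so the pointwise identity $F''+b_1F'-F=\int G_1\Phi\,dx$ presumes regularity you do not have; the manipulations should be carried out on the weak formulation tested against $\Psi$, differentiated once and then integrated, as in the paper.
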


\begin{proof}
We follow the main ideas of the proof of Lemma 3.1 in \cite{LT18Glass}. We prove the lower bound estimate for $W_1$, since the proof of the one for $ \widetilde{W}_1$ is completely analogous. Clearly, the definition of energy solution for the considered Cauchy problem is analogous to the one given in Definition \ref{def energ sol intro} for \eqref{weakly coupled system}. The only difference consists in the assumptions on the nonlinear terms $G_1,G_2$, which have to be supposed in $L^1_{\loc}([0,T)\times\mathbb{R}^n)$ and replace $|v_t|^p,|u_t|^q$ in \eqref{def u}, \eqref{def v}, respectively.

Thanks to the support property for $w$, we can apply the definition of energy solution with test functions that are not compactly supported. Hence, employing the definition of energy solution with $\Psi$ as test function and taking the time derivative of the obtained relation, we find for any $t\in (0,T)$ 
\begin{align*}
 & \frac{d}{dt}  \int_{\mathbb{R}^n} \partial_t w(t,x)\Psi(t,x) \, dx - \int_{\mathbb{R}^n}\Big(\partial_tw(t,x)\Psi_t(t,x)+ w(t,x)\Delta \Psi(t,x)\Big)dx \\ & \quad   +\int_{\mathbb{R}^n}b_1(t)\partial_t w(t,x) \Psi(t,x) dx  	 = \int_{\mathbb{R}^n}G_1(t,x)\Psi(t,x) \,dx,
\end{align*}  where we denote $ G_1(t,x)\equiv G_1(t,x,w(t,x),w_t(t,x),\widetilde{w}(t,x),\widetilde{w}_t(t,x))$ for the sake of brevity.
Multiplying both sides of the previous equality by $m_1(t)$, we find
\begin{align*}
 m_1(t) &  \int_{\mathbb{R}^n}G_1(t,x)\Psi(t,x) dx  \\ & =  m_1(t)\frac{d}{dt}  \int_{\mathbb{R}^n} \partial_t w(t,x)\Psi(t,x) \, dx + m_1(t)b_1(t) \int_{\mathbb{R}^n} \partial_t w(t,x) \Psi(t,x) \,dx \\ & \quad - m_1(t) \int_{\mathbb{R}^n}\big(\partial_t w(t,x)\Psi_t(t,x)+w(t,x)\, \Delta \Psi(t,x)\big)\, dx  \\
& =  \frac{d}{dt} \bigg( m_1(t) \!\int_{\mathbb{R}^n} \partial_t w(t,x)\Psi(t,x) \, dx \bigg) \\ & \quad +m_1(t) \int_{\mathbb{R}^n}\!\big(\partial_t w(t,x) -w(t,x)\big)\Psi(t,x) \, dx,
\end{align*}
where in the last step we used \eqref{derivative multiplier} and the properties $\Psi_t=-\Psi$ and $\Delta \Psi= \Psi$.
Integrating the last relation over $[0,t]$, we find 
\begin{align}
\int_0^t m_1(s) & \int_{\mathbb{R}^n} G_1(s,x)\Psi(s,x) \, dx\,  ds \notag  \\ \quad  & =  m_1(t) \int_{\mathbb{R}^n} \partial_t w(t,x)\Psi(t,x) \, dx -\varepsilon \, m_1(0) \int_{\mathbb{R}^n} w_1(x)\, \Phi(x) \, dx \notag \\
 & \qquad + \int_0^t m_1(s)\int_{\mathbb{R}^n}\big(\partial_t w(s,x) -w(s,x)\big)\Psi(s,x)\, dx \, ds. \label{inter A}
\end{align}
An integration by parts with respect to $t$ provides
\begin{align}
 \int_0^t   m_1(s) & \int_{\mathbb{R}^n} \partial_t w(s,x)  \Psi(s,x) \,  dx \, ds  \notag \\ 
& = 
m_1(t)\int_{\mathbb{R}^n} w(t,x) \Psi(t,x) \,  dx - \varepsilon \, 
m_1(0)\int_{\mathbb{R}^n} w_0(x)\, \Phi(x) \,  dx \notag  \\  & \quad - \int_0^t \int_{\mathbb{R}^n} w(s,x) \, b_1(s)\,  m_1(s) \Psi(s,x) \,  dx \, ds \notag  \\  & \quad  +\int_0^t  m_1(s) \int_{\mathbb{R}^n} w(s,x)   \Psi(s,x) \,  dx \, ds. \label{inter B}
\end{align} 

Consequently, combining \eqref{inter A} and \eqref{inter B}, we arrive at
\begin{align*}
\int_0^t & m_1(s) \int_{\mathbb{R}^n}G_1(s,x)\Psi(s,x) \, dx\,  ds \\ & \quad \quad  + \int_0^t b_1(s)\,  m_1(s) \int_{\mathbb{R}^n} w(s,x) \,  \Psi(s,x) \,  dx \, ds  \\ & \quad \quad + \varepsilon \, m_1(0) \int_{\mathbb{R}^n} \big(w_0(x)+w_1(x)\big) \Phi(x) \, dx   \\ & \quad  =  m_1(t) \int_{\mathbb{R}^n} \big( \partial_t w(t,x)\Psi(t,x) +w(t,x) \Psi(t,x)\big) \, dx  
\\ & \quad  =  m_1(t) \, \frac{d}{dt} \int_{\mathbb{R}^n}  w(t,x)\Psi(t,x) \, dx  + 2m_1(t) \int_{\mathbb{R}^n} w(t,x) \Psi(t,x) \, dx.  
\end{align*}
By the definition of $W_1$ and the nonnegativity of the semilinear term $G_1$, from the previous relation we obtain the inequality
\begin{align*}
m_1(t) \big(W'_1(t)+2 W_1(t)\big) \geqslant \varepsilon \, m_1(0) \, C(w_0,w_1) + \int_0^t b_1(s)\,  m_1(s) \, W_1(s) \, ds 
\end{align*} where $C(w_0,w_1)\doteq  \int_{\mathbb{R}^n} \big(w_0(x)+w_1(x)\big) \Phi(x) \, dx $. Since the multiplier $m_1$ is bounded, using \eqref{boundedness multipliers}, we have
\begin{align}
W'_1(t)+2 W_1(t)  
&\geqslant \varepsilon \, m_1(0) \, C(w_0,w_1) +\frac{1}{m(t)} \int_0^t b_1(s)\,  m_1(s)\,  W_1(s) \, ds. \label{diff inequality U1}
\end{align} Multiplying  both sides in the last estimate by $e^{2t}$ and integrating over $[0,t]$, we get 
\begin{align}
e^{2t}W_1(t)& \geqslant W_1(0)+ \frac{m_1(0)}{2} \, \varepsilon \,  C(w_0,w_1) (e^{2t}-1)\notag \\ & \qquad +\int_0^t  \frac{e^{2s}}{m(s)} \int_0^\tau b_1(\tau)\,  m_1(\tau) \, W_1(\tau) \, d\tau  \, ds. \label{comparison}
\end{align}
Let us prove first the positiveness of $W_1$ by using a comparison argument. Because we assumed the data pairwise nontrivial,  at least one among  $w_0,w_1$ is not identically $0$. In the first case $w_0\not \equiv 0$, as $w_0\geqslant 0$ implies $W_1(0)>0$, by continuity it holds $W_1(t)>0$ in a right neighborhood of $t=0$. If $t_0>0$ was the smallest time such that $W_1(t_0)=0$, then, the evaluation of \eqref{comparison} in $t=t_0$ would yield a contradiction. In the second case $w_0\equiv 0$ and $w_1\not \equiv 0$, we can use \eqref{diff inequality U1} to find a contradiction. In fact, in this case $W_1(0)=0$ and $W_1'(0)=\varepsilon \int_{\mathbb{R}^n} w_1(x)\Phi(x)\, dx >0$. Thus, by continuity $W_1'(t)>0$ for any $t\in [0,t_1)$ for a suitable $t_1>0$. Hence, $W_1$ is strictly increasing, and, in particular, positive in $(0,t_1)$. We assume by contradiction that $t_2>t_1$ is the smallest time such that $W_1(t_2)=0$. Then, $W_1'(t_2)\leqslant 0$. Indeed, if $W_1'(t_2)$ was positive, then, $W_1$ would be strictly increasing in a neighborhood of $t_2$, but this would be a contradiction to the definition of $t_2$. In fact, there would be a smaller zero, because $W_1$ would be negative in a left neighborhood of $t_2$. Finally, if we plug $W_1(t_2)=0, W_1'(t_2)\leqslant 0$ and we use $W_1(t)>0$ for any $t\in (0,t_2)$ in \eqref{diff inequality U1}, we have a contradiction.

 Also, thanks to the fact that $W_1$ is positive, from \eqref{comparison} we obtain
\begin{align*}
W_1(t) &\geqslant e^{-2t} W_1(0)+ \frac{m_1(0)}{2}  \, \varepsilon \,  C(w_0,w_1) (1-e^{-2t}) \\ & \geqslant \frac{m_1(0)}{2} \,\varepsilon \int_{\mathbb{R}^n}w_0(x) \Phi(x) \, dx , 
\end{align*} which is the desired estimate. This concludes the proof. 
\end{proof}

In particular, applying Lemma \ref{lemma U1,V1} to an energy  solution $(u,v)$ of \eqref{weakly coupled system}, we find
\begin{align}
U_1(t) & \geqslant \frac{m_1(0)}{2} \,\varepsilon \int_{\mathbb{R}^n}u_0(x) \Phi(x) \, dx \label{lower bound U1}, \\
V_1(t) & \geqslant \frac{m_2(0)}{2} \, \varepsilon \int_{\mathbb{R}^n}v_0(x) \Phi(x) \, dx \label{lower bound V1}
\end{align}
for any $t\geqslant 0$, where $U_1$ and $V_1$ are defined by \eqref{def U1 and V1}.

Next we follow the main ideas of \cite[Section 3]{LT18Glass} in order to introduce the suitable functionals for the proof of the blow-up result.
Let us point out that 
\begin{align}
\frac{d}{dt}& \bigg( m_1(t) \int_{\mathbb{R}^n}\big(\partial_t u(t,x)+u(t,x)\big) \Psi(t,x) \, dx \bigg) \notag\\ &= b_1(t) m_1(t) \int_{\mathbb{R}^n}\big(\partial_t u(t,x)+u(t,x)\big) \Psi(t,x) \, dx \notag\\ & \quad +m_1(t) \frac{d}{dt}  \int_{\mathbb{R}^n}\big(\partial_t u(t,x)+u(t,x)\big) \Psi(t,x) \, dx. \label{inter 1}
\end{align} 
Choosing $\phi\equiv \Psi$ on $\supp u$ in \eqref{def u}, we have
\begin{align*}
& \int_{\mathbb{R}^n}  \partial_t u(t,x)\Psi(t,x)\,dx-\int_{\mathbb{R}^n}\varepsilon u_1(x)\Phi(x)\,dx - \int_0^t\int_{\mathbb{R}^n} \partial_t u(s,x)\Psi_s(s,x) \,dx\, ds\notag \\
& \ \  +\int_0^t\int_{\mathbb{R}^n}\nabla u(s,x)\cdot\nabla\Psi(s,x)\, dx\, ds+\int_0^t\int_{\mathbb{R}^n}b_1(s)\partial_t u(s,x) \Psi(s,x)\,dx\, ds  \\ & =\int_0^t \int_{\mathbb{R}^n}|\partial_t v(s,x)|^p\Psi(s,x)\,dx \, ds.
\end{align*} Differentiating both sides of the previous equality with respect to $t$, we arrive at
\begin{align}
\int_{\mathbb{R}^n} & \!|\partial_t v(t,x)|^p\Psi(t,x)\,dx  = \frac{d}{dt} \int_{\mathbb{R}^n}\! \partial_t u(t,x)\Psi(t,x)\, dx -\int_{\mathbb{R}^n} \! \partial_t u(t,x)\Psi_t(t,x)\,dx  \notag \\ & \quad + \int_{\mathbb{R}^n} \big( \nabla u(t,x)\cdot\nabla\Psi(t,x) + b_1(t)\partial_t u(t,x) \Psi(t,x)\big)  \,dx . \label{inter a} 
\end{align} Using $\Delta \Psi=\Psi$ and $\Psi_t=-\Psi$, \eqref{inter a} yields
\begin{align}
\int_{\mathbb{R}^n}|\partial_t v(t,x)|^p\Psi(t,x)\,dx &  = \frac{d}{dt} \int_{\mathbb{R}^n} \big( \partial_t u(t,x)+u(t,x)\big)\Psi(t,x)\, dx \notag\\ & \qquad +b_1(t) \int_{\mathbb{R}^n} \partial_t u(t,x) \Psi(t,x) \,dx. \label{inter 3}
\end{align} If we combine \eqref{inter 1} and \eqref{inter 3}, we obtain 
\begin{align}
 \frac{d}{dt} \bigg(  m_1(t) & \int_{\mathbb{R}^n}\big(\partial_t u(t,x) +u(t,x)\big) \Psi(t,x) \, dx \bigg) \notag  \\ &= b_1(t) m_1(t)\, U_1(t)  + m_1(t) \int_{\mathbb{R}^n}|\partial_t v(t,x)|^p\Psi(t,x)\,dx , 
\label{inter 5}
\end{align} where $U_1$ is defined by \eqref{def U1 and V1}. Thanks to \eqref{lower bound U1} we have that $U_1$ is nonnegative, then, integrating \eqref{inter 5} over $[0,t]$, we get  the estimate
\begin{align}
m_1(t)  \int_{\mathbb{R}^n}\big(\partial_t u(t,x) &  +u(t,x)\big) \Psi(t,x)  \, dx  \notag\\ & \geqslant \varepsilon \, m_1(t) \int_{\mathbb{R}^n}\big(u_0(x) +u_1(x)\big) \Phi(x) \, dx   \notag\\ & \quad  + \int_0^t m_1(s) \int_{\mathbb{R}^n}|\partial_t v(s,x)|^p\Psi(s,x)\,dx. \label{inter 7}
\end{align} 
Furthermore, we may rewrite \eqref{inter a} as follows
\begin{align}
\int_{\mathbb{R}^n}|\partial_t v(t,x)|^p\Psi(t,x)\,dx & = \frac{d}{dt} \int_{\mathbb{R}^n} \partial_t u(t,x)\Psi(t,x)\, dx   \notag \\ & \qquad + b_1(t) \int_{\mathbb{R}^n} \partial_t u(t,x) \Psi(t,x)  \,dx \notag \\ & \qquad + \int_{\mathbb{R}^n} \big( \partial_t u(t,x)- u(t,x)\big)\Psi(t,x)\, dx.
\label{inter c}
\end{align} If we multiply both sides of \eqref{inter c} by $m_1(t)$, we find
\begin{align}
 \frac{d}{dt} \bigg(m_1(t)\int_{\mathbb{R}^n} \partial_t u(t,x) & \Psi(t,x) \bigg)  +m_1(t) \int_{\mathbb{R}^n} \big( \partial_t u(t,x)- u(t,x)\big)\Psi(t,x)\, dx \notag \\ &=  m_1(t) \int_{\mathbb{R}^n}|\partial_t v(t,x)|^p\Psi(t,x)\,dx.
\label{inter 9}
\end{align}
Adding \eqref{inter 7} and \eqref{inter 9}, we find
\begin{align}
& \frac{d}{dt} \bigg(m_1(t)\int_{\mathbb{R}^n}  \partial_t u(t,x)  \Psi(t,x) \, dx\bigg)   + 2m_1(t) \int_{\mathbb{R}^n}  \partial_t u(t,x)\Psi(t,x)\, dx  \notag \\ & \quad \geqslant  \varepsilon\, m_1(0) \int_{\mathbb{R}^n}\big(u_0(x) +u_1(x)\big) \Phi(x) \, dx + m_1(t) \int_{\mathbb{R}^n}|\partial_t v(t,x)|^p\Psi(t,x)\,dx  \notag \\ & \qquad + \int_0^t m_1(s) \int_{\mathbb{R}^n}|\partial_t v(s,x)|^p\Psi(s,x)\,dx. \label{inter 11}
\end{align}
In a complete analogous way, one can prove
\begin{align}
 & \frac{d}{dt} \bigg(m_2(t)\int_{\mathbb{R}^n}  \partial_t v(t,x)  \Psi(t,x) \, dx\bigg)   + 2m_2(t) \int_{\mathbb{R}^n}  \partial_t v(t,x)\Psi(t,x)\, dx  \notag \\ & \quad \geqslant  \varepsilon\, m_2(0) \int_{\mathbb{R}^n}\big(v_0(x) +v_1(x)\big) \Phi(x) \, dx + m_2(t) \int_{\mathbb{R}^n}|\partial_t u(t,x)|^q\Psi(t,x)\,dx  \notag \\ & \qquad + \int_0^t m_2(s) \int_{\mathbb{R}^n}|\partial_t u(s,x)|^q\Psi(s,x)\,dx. \label{inter 12}
\end{align}

Let us set the auxiliary functionals
\begin{align*}
U_2(t) & \doteq m_1(t) \int_{\mathbb{R}^n}  \partial_t u(t,x)  \Psi(t,x) \, dx -\varepsilon\, \frac{m_1(0)}{2} \int_{\mathbb{R}^n}u_1(x) \Phi(x) \, dx \\ & \qquad -\frac{1}{2} \int_0^t m_1(s) \int_{\mathbb{R}^n}|\partial_t v(s,x)|^p\Psi(t,x)\,dx \, ds \, ,  \\
V_2(t) & \doteq m_2(t) \int_{\mathbb{R}^n}  \partial_t v(t,x)  \Psi(t,x) \, dx -\varepsilon\, \frac{m_2(0)}{2} \int_{\mathbb{R}^n}v_1(x) \Phi(x) \, dx \\ & \qquad -\frac{1}{2} \int_0^t m_2(s) \int_{\mathbb{R}^n}|\partial_t u(s,x)|^q\Psi(t,x)\,dx \, ds \, .
\end{align*}
Clearly, 
\begin{align*}
U_2(0) &= \varepsilon\, \frac{m_1(0)}{2} \int_{\mathbb{R}^n}u_1(x) \Phi(x) \, dx\geqslant 0 , \\  V_2(0) &= \varepsilon\, \frac{m_2(0)}{2} \int_{\mathbb{R}^n}v_1(x) \Phi(x) \, dx \geqslant 0.
\end{align*} Besides, \eqref{inter 11} implies
\begin{align}
& U_2'(t)+2 U_2(t)  \notag \\ & \quad\geqslant \varepsilon\, m_1(0) \int_{\mathbb{R}^n}u_0(x) \Phi(x) \, dx   + \frac{1}{2}\, m_1(t) \int_{\mathbb{R}^n}  |\partial_t v(t,x)|^p  \Psi(t,x) \, dx \notag \\ & \quad \geqslant 0.\label{inter 13}
\end{align} Hence, multiplying the left hand side of \eqref{inter 13} by $e^{2t}$ and integrating over $[0,t]$, we get $U_2(t)\geqslant e^{-2t} U_2(0)\geqslant 0$. Similarly, employing \eqref{inter 12}, we can prove that $V_2(t)\geqslant 0$ for any $t\geqslant 0$.

Therefore, as the functionals $U_2,V_2$ are nonnegative we may write
\begin{align}
 m_1(t) \int_{\mathbb{R}^n}  \partial_t u(t,x) & \Psi(t,x) \, dx  \geqslant \varepsilon\, \frac{m_1(0)}{2} \int_{\mathbb{R}^n}u_1(x) \Phi(x) \, dx \notag  \\ & \quad +\frac{1}{2} \int_0^t m_1(s) \int_{\mathbb{R}^n}|\partial_t v(s,x)|^p\Psi(t,x)\,dx \, ds \, , \label{inter 15}\\ 
   m_2(t) \int_{\mathbb{R}^n}  \partial_t v(t,x) &  \Psi(t,x) \, dx  \geqslant \varepsilon\, \frac{m_2(0)}{2} \int_{\mathbb{R}^n}v_1(x) \Phi(x) \, dx \notag  \\ & \quad  +\frac{1}{2} \int_0^t m_2(s) \int_{\mathbb{R}^n}|\partial_t u(s,x)|^q\Psi(t,x)\,dx \, ds \, . \label{inter 16}
\end{align}

After the above preparatory results we can finally introduce the functionals whose dynamic is studied in order to prove Theorem \ref{Thm blowup |vt|^p, |ut|^q}. Let us define for any $t\geqslant 0$
\begin{align}
\mathcal{F}(t)& \doteq \varepsilon\, \frac{m_1(0)}{2} \int_{\mathbb{R}^n}\!  u_1(x) \Phi(x) \, dx +\frac{1}{2} \int_0^t \! m_1(s) \int_{\mathbb{R}^n} \! |\partial_t v(s,x)|^p\Psi(t,x)\,dx \, ds \, , \label{def mathcalF}\\
\mathcal{G}(t)& \doteq \varepsilon \frac{m_2(0)}{2} \int_{\mathbb{R}^n}\! v_1(x) \Phi(x) \, dx +\frac{1}{2} \int_0^t\! m_2(s) \int_{\mathbb{R}^n}\! |\partial_t u(s,x)|^q\Psi(t,x)\,dx \, ds \, . \label{def mathcalG}
\end{align}
In particular, \eqref{inter 15} and \eqref{inter 16} may be rewritten as
\begin{align}
 m_1(t) \int_{\mathbb{R}^n}  \partial_t u(t,x)  \Psi(t,x) \, dx & \geqslant \mathcal{F}(t) , \label{inter 15 bis}\\ 
   m_2(t) \int_{\mathbb{R}^n}  \partial_t v(t,x)  \Psi(t,x) \, dx & \geqslant \mathcal{G}(t) . \label{inter 16 bis}
\end{align}
Using H\"older's inequality, we have
\begin{align}
\int_{\mathbb{R}^n}  \partial_t v(t,x)  \Psi(t,x) \, dx \, &\leqslant \bigg(\int_{\mathbb{R}^n}|\partial_t v(t,x)|^p  \Psi(t,x) \, dx\bigg)^{\frac{1}{p}}\bigg(\int_{B_{R+t}}\!\!\Psi(t,x)\, dx\bigg)^{\frac{1}{p'}}\notag \\
\, &\lesssim  (1+t)^{\frac{n-1}{2p'}}\bigg(\int_{\mathbb{R}^n}|\partial_t v(t,x)|^p  \Psi(t,x) \, dx\bigg)^{\frac{1}{p}}, \label{inter holder}
\end{align} where in the last step we used 
\begin{align*}
\int_{B_{R+t}}\Psi(t,x)\, dx \lesssim (1+t)^{\frac{n-1}{2}}.
\end{align*} For further details on this estimate see \cite{YZ06} or \cite[estimate (3.5)]{LT18Glass}. 

Combing \eqref{boundedness multipliers}, \eqref{inter 16 bis} and \eqref{inter holder}, we finally get
\begin{align*}
\mathcal{F}'(t) &=\tfrac{1}{2}m_1(t)\int_{\mathbb{R}^n}|\partial_t v(t,x)|^p\Psi(t,x) \, dx  \\ & \gtrsim m_1(t)\, (1+t)^{-\frac{n-1}{2}(p-1)} \bigg(\int_{\mathbb{R}^n}  \partial_t v(t,x)  \Psi(t,x) \, dx\bigg)^p  \\
& \gtrsim m_1(t)\,  (m_2(t))^{-p}\, (1+t)^{-\frac{n-1}{2}(p-1)} (\mathcal{G}(t))^p  \gtrsim  (1+t)^{-\frac{n-1}{2}(p-1)} (\mathcal{G}(t))^p.
\end{align*}
In a similar way, it is possible to show that \eqref{inter 15 bis} implies the estimate $\mathcal{G}'(t) \gtrsim  (1+t)^{-\frac{n-1}{2}(q-1)} (\mathcal{F}(t))^q$.

Summarizing, throughout this section we proved the following lemma.
\begin{lem} Let us assume that $u_0,u_1,v_0,v_1$ satisfy the assumption of Theorem \ref{Thm blowup |vt|^p, |ut|^q}. Let $(u,v)$ be a local solution of \eqref{weakly coupled system} and let $\mathcal{F}$ and $\mathcal{G}$ be the functionals defined by \eqref{def mathcalF} and \eqref{def mathcalG}, respectively. Then, the following estimates hold:
\begin{align}
\mathcal{F}'(t) &\geqslant C  (1+t)^{-\frac{n-1}{2}(p-1)} (\mathcal{G}(t))^p \label{fundamental estimate F'},\\
\mathcal{G}'(t) &\geqslant K  (1+t)^{-\frac{n-1}{2}(q-1)} (\mathcal{F}(t))^q, \label{fundamental estimate G'}
\end{align}
for any $t\geqslant 0$, where $C,K$ are positive constants depending on $n,p,q,R,b_1,b_2$.
\end{lem}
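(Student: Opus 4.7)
The plan is to reach the pair of differential inequalities \eqref{fundamental estimate F'}--\eqref{fundamental estimate G'} by peeling off the two nonlinearities one at a time, using the multipliers $m_1,m_2$ to absorb the damping terms, and then converting the linear moments $\int \partial_t u \, \Psi\,dx$ and $\int \partial_t v \, \Psi\,dx$ into the nonlinear moments with a weight of the form $(1+t)^{-\frac{n-1}{2}(p-1)}$ via Hölder's inequality.

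First, I would plug the test function $\Psi(t,x)=e^{-t}\Phi(x)$ (with $\Phi$ the Laplace eigenfunction in \eqref{def eigenfunction laplace op}) into the weak formulation \eqref{def u}, justified by the support condition \eqref{support property solution}; differentiating in $t$ and using $\Psi_t=-\Psi$ together with $\Delta\Psi=\Psi$, the Laplacian term is traded for a zero-order term. Multiplying the resulting identity by $m_1(t)$ and invoking \eqref{derivative multiplier} turns the damping contribution into a total derivative, yielding a clean expression of the form
\begin{align*}
\frac{d}{dt}\bigl(m_1(t) \textstyle\int (\partial_t u + u)\Psi\,dx\bigr) = b_1(t)m_1(t)U_1(t) + m_1(t)\textstyle\int|\partial_t v|^p\Psi\,dx.
\end{align*}
Integrating in time and combining with the analogous identity obtained by multiplying \eqref{inter a} alone by $m_1(t)$ produces an inequality in which the integrated nonlinearity appears with a factor $\tfrac12$, which is precisely the shape needed to introduce the auxiliary functional $U_2$.

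Next I would define $U_2$ (resp.\ $V_2$) as the difference between $m_1(t)\!\int\partial_t u\,\Psi\,dx$ and the definition of $\mathcal{F}(t)$ (resp.\ between $m_2(t)\!\int\partial_t v\,\Psi\,dx$ and $\mathcal{G}(t)$), check that $U_2(0),V_2(0)\geqslant 0$, and verify that $U_2'+2U_2\geqslant 0$ thanks to the lower bound \eqref{lower bound U1} for $U_1$ (established by Lemma \ref{lemma U1,V1}) and the nonnegativity of the nonlinear integrand. An integrating factor $e^{2t}$ then yields $U_2(t)\geqslant 0$, and the same argument for $V_2$, producing the key pointwise bounds $m_1(t)\!\int\partial_t u\,\Psi\,dx\geqslant \mathcal{F}(t)$ and $m_2(t)\!\int\partial_t v\,\Psi\,dx\geqslant \mathcal{G}(t)$.

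Finally I would apply Hölder's inequality with exponents $p$ and $p'$ on the support ball $B_{R+t}$, using the pointwise estimate $\int_{B_{R+t}}\Psi(t,x)\,dx\lesssim (1+t)^{\frac{n-1}{2}}$ (which follows from the asymptotic behavior of $\Phi$ as in \cite{YZ06}). This converts the linear moment $\int\partial_t v\,\Psi\,dx$ into $(1+t)^{\frac{n-1}{2p'}}\bigl(\int|\partial_t v|^p\Psi\,dx\bigr)^{1/p}$; raising to the $p$-th power and combining with the lower bound $m_2(t)\!\int\partial_t v\,\Psi\,dx\geqslant \mathcal{G}(t)$ and the uniform bounds \eqref{boundedness multipliers} for $m_1,m_2$ yields \eqref{fundamental estimate F'}. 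The estimate \eqref{fundamental estimate G'} follows by symmetry upon interchanging the roles of $u,v$ and $p,q$. The subtlest step is the nonnegativity of $U_2,V_2$, which hinges on having the lower bound \eqref{lower bound U1}--\eqref{lower bound V1} for $U_1,V_1$ already in hand; the rest is bookkeeping with the multipliers.
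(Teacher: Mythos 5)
Your proposal reproduces the paper's derivation step by step: the same choice of $\Psi=e^{-t}\Phi$ as test function, the same use of $m_1,m_2$ via \eqref{derivative multiplier} to convert the damping term into a total derivative, the same combination of the integrated identity with the pointwise identity to introduce the auxiliary functionals $U_2,V_2$, the same integrating factor $e^{2t}$ to establish $U_2,V_2\geqslant 0$ (relying on the nonnegativity of $U_1,V_1$ from Lemma \ref{lemma U1,V1}), and the same H\"older step with $\int_{B_{R+t}}\Psi\,dx\lesssim (1+t)^{\frac{n-1}{2}}$ and \eqref{boundedness multipliers}. This is essentially identical to the paper's argument.
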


\begin{rem} In some cases it is more convenient to rewrite \eqref{fundamental estimate F'} and \eqref{fundamental estimate G'} in the integral form, namely,
\begin{align}
\mathcal{F}(t) &\geqslant \mathcal{F}(0) +C \int_0^t (1+s)^{-\frac{n-1}{2}(p-1)} (\mathcal{G}(s))^p \, ds \label{fundamental estimate F},\\
\mathcal{G}(t) &\geqslant \mathcal{G}(0)+K  \int_0^t (1+s)^{-\frac{n-1}{2}(q-1)} (\mathcal{F}(s))^q \, ds, \label{fundamental estimate G}
\end{align} for any $t\geqslant 0$.
\end{rem}

\section{Proof of Theorem \ref{Thm blowup |vt|^p, |ut|^q}: subcritical case} \label{Section |vt|^p, |ut|^q subcrical case}

In this section we prove the blow-up result in the subcritical case, that is, for $p,q>1$ satisfying $$\max\left\{\frac{p+1}{pq-1},\frac{q+1}{pq-1}\right\}> \frac{n-1}{2}\,.$$

The main tool of the proof is the next Kato's type lemma on ordinary differential inequalities including an upper bound estimate for the lifespan, whose proof can be found in \cite{Tak15}.

\begin{lem} \label{Kato's lemma} Let $r>1$, $a>0$, $b>0$ satisfy $$M\doteq \frac{r-1}{2}a-\frac{b}{2}+1>0.$$ Assume that $H\in \mathcal{C}^2([0,T))$ satisfies
\begin{align}
 & H(t)  \geqslant A t^a  \  \  \, \qquad \qquad  \qquad \qquad \mbox{for} \ \ t\geqslant T_0, \label{H lower bound Kato lemma} \\
 & H''(t)  \geqslant B (t+R)^{-b}|H(t)|^{r} \qquad \mbox{for} \ \ t\geqslant 0, \label{H'' lower bound Kato lemma} \\
&  H(0)  \geqslant 0,  \ \ H'(0)>0, \label{H(0), H'(0) conditions Kato lemma}
\end{align} where $A,B,R,T_0$ are positive constants. Then, there exists a positive constant $C_0=C_0(r,a,b,B)$ such that 
\begin{align} 
T< 2^{\frac{2}{M}}T_1 \label{upper bound T Kato lemma}
\end{align} 
 holds, provided that
\begin{align}
T_1\doteq \max\left\{T_0,\frac{H(0)}{H'(0)},R\right\} \geqslant C_0 A^{-\frac{r-1}{2M}}. \label{lower bound T1 Kato lemma}
\end{align}
\end{lem}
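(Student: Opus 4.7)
The plan is to prove this Kato-type ordinary differential inequality via the classical two-step scheme: first derive a first-order energy-type inequality from \eqref{H'' lower bound Kato lemma}, then integrate it by separation of variables, using the polynomial lower bound \eqref{H lower bound Kato lemma} only in the final step.

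First, since the right-hand side of \eqref{H'' lower bound Kato lemma} is nonnegative, $H''\geqslant 0$ on $[0,T)$, so $H'$ is nondecreasing. Combined with $H'(0)>0$ from \eqref{H(0), H'(0) conditions Kato lemma}, this yields $H'(t)\geqslant H'(0)>0$, so $H$ is strictly increasing and $H(t)\geqslant H(0)+H'(0)t\geqslant 0$; in particular the absolute value may be dropped in $|H(t)|^r$.

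Next, multiplying \eqref{H'' lower bound Kato lemma} by $2H'(t)\geqslant 0$, integrating over $[T_1,t]$, and performing an integration by parts via the identity $H^rH'=(r+1)^{-1}(H^{r+1})'$, I arrive at
\begin{align*}
(H'(t))^2 \geqslant &\ \frac{2B}{r+1}(t+R)^{-b}H(t)^{r+1} + \frac{2Bb}{r+1}\int_{T_1}^{t}(s+R)^{-b-1}H(s)^{r+1}\,ds \\
&\ - \frac{2B}{r+1}(T_1+R)^{-b}H(T_1)^{r+1} + (H'(T_1))^2.
\end{align*}
Dropping the nonnegative integral and absorbing the remaining boundary terms is the crux of the argument: from \eqref{H lower bound Kato lemma} and $T_1\geqslant T_0$ one has $H(T_1)\geqslant A T_1^a$, and the assumption $T_1\geqslant C_0 A^{-(r-1)/(2M)}$ of \eqref{lower bound T1 Kato lemma}, combined with $T_1\geqslant R$, is calibrated so that one can sacrifice a fixed fraction (say $1/2$) of the leading term to swallow the boundary contribution. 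This yields the first-order ODI
\begin{align*}
H'(t)\geqslant B_1\,(t+R)^{-b/2}H(t)^{(r+1)/2},\qquad t\geqslant T_1,
\end{align*}
for a positive constant $B_1=B_1(r,B)$.

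Finally, separating variables and integrating from $T_1$ to $t$ gives
\begin{align*}
\frac{2}{r-1}\bigl(H(T_1)^{-(r-1)/2}-H(t)^{-(r-1)/2}\bigr)\geqslant B_1\int_{T_1}^{t}(s+R)^{-b/2}\,ds.
\end{align*}
Bounding the left-hand side from above by $\tfrac{2}{r-1}A^{-(r-1)/2}T_1^{-a(r-1)/2}$ and evaluating the right-hand side, the hypothesis $M>0$, which is precisely what makes the exponent $2M=(r-1)a-b+2$ positive, translates directly into the upper bound $T+R<2^{2/M}(T_1+R)$, giving \eqref{upper bound T Kato lemma}. The main obstacle will be the absorption of the two boundary terms at $s=T_1$ in the energy estimate: the explicit value of $C_0(r,a,b,B)$ must be chosen so that the interaction of $T_1^{-a(r-1)/2}$, $A^{-(r-1)/2}$, and $T_1^{1-b/2}$ leaves a single clean constant $B_1$. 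A secondary subtlety is the case $b\geqslant 2$, where $\int_{T_1}^{t}(s+R)^{-b/2}\,ds$ is not polynomially unbounded; this is handled by first iterating the polynomial lower bound via $a_{n+1}=ra_n+2-b$ (for which $a_{n+1}-a_n=2M>0$) to improve the exponent $a$ enough that the separation-of-variables argument works uniformly in $b$.
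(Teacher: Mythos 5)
The paper does not prove Lemma \ref{Kato's lemma}: it is quoted verbatim from \cite{Tak15}, so there is no in-paper argument to compare against. Your architecture (convexity of $H$ to get $H'>0$, multiplication of \eqref{H'' lower bound Kato lemma} by $H'$, passage to a first-order Riccati-type inequality, then separation of variables with \eqref{H lower bound Kato lemma} entering only at the end) is the classical Sideris--Kato scheme that the cited proof also follows, so the overall route is correct.

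The step you yourself call the crux is, however, where the argument as written breaks. After integrating by parts you must absorb $\frac{2B}{r+1}(T_1+R)^{-b}H(T_1)^{r+1}$, and you propose to do this with ``a fixed fraction of the leading term'' $(t+R)^{-b}H(t)^{r+1}$, claiming that \eqref{lower bound T1 Kato lemma} is calibrated for this. It is not: \eqref{lower bound T1 Kato lemma} and \eqref{H lower bound Kato lemma} bound $H(T_1)$ from \emph{below}, whereas the absorption needs control of $H(T_1)$ from \emph{above}, and no such bound is available ($H'(T_1)$, hence $H(T_1)$, can be arbitrarily large relative to $A$ and $T_1$). Quantitatively, the best available growth is $H(t)\geqslant H'(T_1)(t-T_1)\geqslant \frac{t-T_1}{2T_1}H(T_1)$ (using $H(T_1)\leqslant H(0)+H'(T_1)T_1\leqslant 2H'(T_1)T_1$, which needs $T_1\geqslant H(0)/H'(0)$), so the ratio of the leading term to the boundary term at $t=\lambda T_1$ is only of order $\lambda^{r+1-b}$; when $b\geqslant r+1$ (which is compatible with $M>0$ once $a>1$) no fixed dilation $\lambda$ makes this ratio large, and interpolating with $At^a$ does not help because $H(T_1)$ still appears with no upper bound. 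The standard repair is to \emph{not} integrate by parts: bound $(s+R)^{-b}\geqslant (t+R)^{-b}$ inside $\int_{T_1}^t(s+R)^{-b}H^rH'\,ds$ to get $(H'(t))^2\geqslant \frac{2B}{r+1}(t+R)^{-b}\bigl(H(t)^{r+1}-H(T_1)^{r+1}\bigr)$, and then the absorption reduces to $H(t)\geqslant 2^{1/(r+1)}H(T_1)$, which the convexity estimate above gives for all $t\geqslant 3T_1$. The hypothesis \eqref{lower bound T1 Kato lemma} is instead consumed in your final step: it makes $\frac{2}{r-1}(AT_1^a)^{-(r-1)/2}$ small enough that the right-hand integral exceeds it by time $\simeq 2^{2/M}T_1$, forcing $H(t)^{-(r-1)/2}$ to hit zero. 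For the same reason your worry about $b\geqslant 2$ is unfounded and the exponent-improving iteration is unnecessary: the integral $\int(s+R)^{-b/2}ds$ need not diverge, it only needs to exceed a quantity that \eqref{lower bound T1 Kato lemma} makes arbitrarily small, and the exponent balance $-\frac{a(r-1)}{2}+\frac{b}{2}-1=-M$ works uniformly in $b$.
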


Let us consider the case in which $p,q$ satisfy $\frac{p+1}{pq-1}-\frac{n-1}{2}>0$.
From \eqref{fundamental estimate G} and H\"older's inequality, it follows
\begin{align*}
\mathcal{G}(t) & \gtrsim  (1+t)^{-\frac{n-1}{2}(q-1)}\int_0^t (\mathcal{F}(s))^q\, ds \\ &\gtrsim   (1+t)^{-\frac{n-1}{2}(q-1)-(q-1)} \bigg(\int_0^t \mathcal{F}(s)\, ds \bigg)^q.
\end{align*} Plugging this lower bound for $\mathcal{G}$ in \eqref{fundamental estimate F'}, we get
\begin{align}
\mathcal{F}'(t) \gtrsim (1+t)^{-\frac{n-1}{2}(pq-1)-p(q-1)}  \bigg(\int_0^t \mathcal{F}(s)\, ds \bigg)^{pq}. \label{lower bound mathcalF'}
\end{align}
Let us define now the functional $$F(t)\doteq \int_0^t \mathcal{F} (\tau) d\tau.$$ Then, \eqref{lower bound mathcalF'} is equivalent to 
\begin{align} \label{F'' lower bound Kato lemma}
F''(t) \gtrsim (1+t)^{-\frac{n-1}{2}(pq-1)-p(q-1)}  (F(t))^{pq} \qquad \mbox{for} \ \ t\geqslant 0.
\end{align} Besides, 
\begin{align} \label{F(0), F'(0) conditions Kato lemma}
F(0)=0, \qquad F'(0)=\mathcal{F}(0)=\varepsilon I_1[u_1]>0,
\end{align} where $I_1[u_1]\doteq \frac{m_1(0)}{2} \int_{\mathbb{R}^n}u_1(x) \Phi(x) \, dx$. Finally, since $\mathcal{F}$ is increasing,
\begin{align} F(t)\geqslant t \, \mathcal{F}(0) = A_0 \varepsilon \, t \qquad \mbox{for} \ \ t\geqslant 0,\label{F lower bound Kato lemma} 
\end{align} where $A_0$ is positive, independent of $\varepsilon$ constant. Combining \eqref{F'' lower bound Kato lemma}, \eqref{F(0), F'(0) conditions Kato lemma} and \eqref{F lower bound Kato lemma}, we can apply Lemma \ref{Kato's lemma} to $F$ with $r=pq$, $a=1$, $b=\frac{n-1}{2}(pq-1)+p(q-1)$, $A=A_0 \varepsilon$ and $R=1$. In particular, thanks to \eqref{F lower bound Kato lemma}, we can take $T_0\doteq (A_0 \varepsilon)^{-\Lambda(n,p,q)^{-1}}$. 

Therefore,  we may choose $\varepsilon_0$ sufficiently small, such that for any $\varepsilon\in (0,\varepsilon_0]$ the condition $$T_0\geqslant \max\left\{\frac{F(0)}{F'(0)},R\right\}$$ holds, due to the fact that the quantity $\frac{F(0)}{F'(0)}=0$ does not depend on  $\varepsilon$, and, then, using the notations of Lemma \ref{Kato's lemma}, we have $T_0=T_1$.
  Hence, \eqref{upper bound T Kato lemma} implies $T\lesssim \varepsilon^{-\Lambda(n,p,q)^{-1}}$. 
  
  The treatment of the case in which $p,q$ satisfy $\frac{q+1}{pq-1}-\frac{n-1}{2}>0$ is totally symmetric. Indeed, by switching the role of  $\mathcal{F}$ and $\mathcal{G}$ we get $T\lesssim \varepsilon^{-\Lambda(n,q,p)^{-1}}$. This completes the proof of \eqref{lifespan upper bound estimate} in the subcritical case.

\section{Proof of Theorem \ref{Thm blowup |vt|^p, |ut|^q}: critical case} \label{Section |vt|^p, |ut|^q crical case}

In this section we prove the blow-up result in the critical case $$\max\left\{\frac{p+1}{pq-1},\frac{q+1}{pq-1}\right\}= \frac{n-1}{2}\,.$$  Differently from Section \ref{Section |vt|^p, |ut|^q subcrical case} in this case we will employ an iteration argument. Without loss of generality we may assume $\tfrac{p+1}{pq-1}=\tfrac{n-1}{2}$ (in the case $\tfrac{q+1}{pq-1}=\tfrac{n-1}{2}$ the proof is completely analogous, provided that we switch the roles of $\mathcal{F}$ and $\mathcal{G}$).

 In order to get \eqref{lifespan upper bound estimate} in critical case, we have to consider separately the case $p\neq q$, which corresponds to $\Lambda(n,p,q)=0<\Lambda(n,q,p)$, from the case $p=q$, which corresponds to $\Lambda(n,p,q)=0=\Lambda(n,q,p)$.

\subsection*{Case $p\neq q$} \label{Subsection crit p not= q}

In this case we apply the so-called slicing method (cf. \cite{AKT00}, where this approach has been used for the first time). We introduce the sequence $\{\ell_j\}_{j\in \mathbb{N}}$ with $\ell_j\doteq 2-2^{-j}$. The first step of our procedure consists in proving via an inductive argument the sequence of lower bound estimates
\begin{align}\label{lower bound mathcalF j p noteq q}
\mathcal{F}(t)\geqslant C_j \bigg(\log \bigg(\frac{t}{\ell_j}\bigg)\bigg)^{a_j} \qquad \mbox{for any} \ t\geqslant \ell_j  \  \mbox{and} \ \mbox{for any} \ j\in \mathbb{N},
\end{align} where $\{a_j\}_{j\in\mathbb{N}}$ and $\{C_j\}_{j\in\mathbb{N}}$ are sequences of nonnegative numbers that we will determine throughout this section. We point out that, due to \eqref{fundamental estimate F}, \eqref{lower bound mathcalF j p noteq q} is satisfied  in the case $j=0$ with $$a_0\doteq 0 \ \ \mbox{and} \ \ C_0\doteq \mathcal{F}(0)=\varepsilon\, I_1[u_1].$$ 

Let us prove now the inductive step. We assume that \eqref{lower bound mathcalF j p noteq q} is true for some $j\geqslant 0$. Hence, plugging \eqref{lower bound mathcalF j p noteq q} in \eqref{fundamental estimate G} and shrinking the domain of integration, we have for $t\geqslant \ell_{j+1}\geqslant 1$
\begin{align*}
\mathcal{G}(t) & \geqslant K \int_{\ell_j}^t (1+s)^{-\frac{n-1}{2}(q-1)} C_j^q \Big(\log \left(\tfrac{s}{\ell_j}\right)\Big)^{a_jq}  ds \\  & \geqslant K C_j^q (1+t)^{-\frac{n-1}{2}(q-1)}  \int_{\ell_j}^t  \Big(\log \left(\tfrac{s}{\ell_j}\right)\Big)^{a_jq} ds \\
 & \geqslant K C_j^q (1+t)^{-\frac{n-1}{2}(q-1)}  \int_{\tfrac{\ell_j t}{\ell_{j+1}}}^t  \Big(\log \left(\tfrac{s}{\ell_j}\right)\Big)^{a_jq} ds \\ & \geqslant K C_j^q \Big(1-\tfrac{\ell_j }{\ell_{j+1}}\Big) t \,(1+t)^{-\frac{n-1}{2}(q-1)}  \Big(\log \left(\tfrac{t}{\ell_{j+1}}\right)\Big)^{a_jq}   \\
 & \geqslant  K C_j^q 2^{-(j+3)} (1+t)^{-\frac{n-1}{2}(q-1)+1}  \Big(\log \left(\tfrac{t}{\ell_{j+1}}\right)\Big)^{a_jq}, 
\end{align*} where in the last step we employed the inequality $1-\frac{\ell_j}{\ell_{j+1}}\geqslant 2^{-(j+2)}$.

Using the above lower bound for $\mathcal{G}$ in \eqref{fundamental estimate F}, after restricting the domain of integration, for $t\geqslant \ell_{j+1}$ we arrive at
\begin{align*}
\mathcal{F}(t) & 
\geqslant C K^p C_j^{pq} 2^{-(j+3)p}  \int_{\ell_{j+1}}^t (1+s)^{-\frac{n-1}{2}(pq-1)+p}  \Big(\log \left(\tfrac{s}{\ell_{j+1}}\right)\Big)^{a_jpq} ds \\
& \geqslant C K^p C_j^{pq} 2^{-(j+3)p-1}  \int_{\ell_{j+1}}^t s^{-1}  \Big(\log \left(\tfrac{s}{\ell_{j+1}}\right)\Big)^{a_jpq} ds \\
& = C K^p C_j^{pq} 2^{-(j+3)p-1} (a_j pq+1)^{-1}  \Big(\log \left(\tfrac{t}{\ell_{j+1}}\right)\Big)^{a_jpq+1},
\end{align*} where in the second last step we used the condition $\Lambda(n,p,q)=0$ to get $-1$ as power for term $(1+s)$ in the integral. Summarizing we proved \eqref{lower bound mathcalF j p noteq q} for $j+1$ with 
\begin{align*}
C_{j+1}\doteq C K^p  2^{-(j+3)p-1} (a_j pq+1)^{-1} C_j^{pq} \ \ \mbox{and} \ \ a_{j+1}\doteq a_j pq +1.
\end{align*}

In the next step we determine a lower bound for $C_j$. However, we need to find the explicit representation of $a_j$ first. As $a_j=1+ pq a_{j-1}$, applying iteratively this relation and the value of initial element of the sequence $a_0=0$, we find
\begin{align}
a_j=\sum_{k=0}^{j-1} (pq)^k +(pq)^j a_0= \frac{(pq)^j-1}{pq-1}. \label{explicit expression aj p noteq q}
\end{align}
 In particular, $a_{j-1} pq+1 =a_j\leqslant (pq-1)^{-1} (pq)^j$ implies 
\begin{align} \label{lower bound Cj p noteq q}
C_j\geqslant N 2^{-jp}(pq)^{-j} C_{j-1}^{pq},
\end{align} where $N\doteq  C K^p  2^{-2p-1}(pq-1)$. Applying the  logarithmic function to both sides of \eqref{lower bound Cj p noteq q} and using iteratively the obtained inequality, we find
\begin{align}
\log C_j  & \geqslant pq \log C_{j-1}  -j\log (2^{p}(pq))+\log N \notag \\
& \geqslant (pq)^2 \log C_{j-2}  -(j+(j-1)pq)\log (2^{p}(pq))+(1+pq)\log N \notag \\
& \geqslant \cdots \geqslant (pq)^j \log C_{0}  -\sum_{k=0}^{j-1} (j-k)(pq)^k\log (2^{p}(pq))+\sum_{k=0}^{j-1}(pq)^k \log N \notag \\
&= (pq)^j \bigg(\log C_0-\frac{pq}{(pq-1)^2}\log(2^p pq )+\frac{\log N}{pq-1}\bigg) \notag \\ & \quad +(j+1)\,\frac{\log(2^p pq)}{pq-1}+\frac{\log(2^p pq)}{(pq-1)^2}-\frac{\log N}{pq-1}, \label{lower bound log Cj p noteq q}
\end{align} where in the last step we employed the formula
\begin{align}\label{sum formulas}
\sum_{k=0}^{j-1} (j-k)(pq)^k= \frac{1}{pq-1} \bigg(\frac{(pq)^{j+1}-1}{pq-1}-(j+1)\bigg),
\end{align} which can be proved by induction.

Thus, for $j\geqslant j_0\doteq \lceil \frac{\log N}{\log (2^p pq)}-1-\frac{1}{pq-1} \rceil$ by \eqref{lower bound log Cj p noteq q} we get 
\begin{align}
\log C_j  & \geqslant  (pq)^j \bigg(\log C_0-\frac{pq}{(pq-1)^2}\log(2^p pq )+\frac{\log N}{pq-1}\bigg) \notag \\ &= (pq)^j \log (D \varepsilon) ,\label{lower bound log Cj p noteq q 2}
\end{align} where $D\doteq (2^p pq)^{-\frac{pq}{(pq-1)^2}}N^{\frac{1}{pq-1}}I_1[u_1]$. Combining \eqref{lower bound mathcalF j p noteq q} and \eqref{lower bound log Cj p noteq q 2}, it results for any $t\geqslant 2 \geqslant \ell_j$
\begin{align*}
\mathcal{F}(t) & \geqslant \exp\big((pq)^j \log(D\varepsilon)\big) \Big(\log\big(\tfrac{t}{\ell_j}\big)\Big)^{a_j}   \geqslant \exp\big((pq)^j \log(D\varepsilon)\big) \Big(\log\big(\tfrac{t}{2}\big)\Big)^{a_j}.
\end{align*} Since for any $t\geqslant 4$ it holds $\log\big(\frac{t}{2}\big)\geqslant \frac{1}{2}\log t$, from the above relation and \eqref{explicit expression aj p noteq q} it follows
\begin{align}
\mathcal{F}(t) & \geqslant \Big((pq)^j \log\Big(2^{-\frac{1}{pq-1}}D\varepsilon\, (\log t)^{\frac{1}{pq-1}}\Big)\Big) \big(\log\big(\tfrac{t}{2}\big)\big)^{-\frac{1}{pq-1}}. \label{lower bound mathcalF final one}
\end{align}

Finally, we may choose $\varepsilon_0>0$ so small that $$\exp\left(2 D^{-pq+1}\varepsilon_0^{-(pq-1)}\right)\geqslant 4.$$ Consequently, for any $\varepsilon\in (0,\varepsilon_0]$ and for $t >\exp\left(2 D^{-pq+1}\varepsilon^{-(pq-1)}\right) $ it results $t\geqslant 4$ and $\log\Big(2^{-\frac{1}{pq-1}}D\varepsilon\, (\log t)^{\frac{1}{pq-1}}\Big)>0$ and, thus, letting $j\to \infty$ in \eqref{lower bound mathcalF final one} we find that the lower bound of $\mathcal{F}(t)$ blows up. Therefore, $\mathcal{F}(t)$ may be finite only for $t\leqslant \exp\left(2D^{-pq+1}\varepsilon^{-(pq-1)}\right)$. This is exactly \eqref{lifespan upper bound estimate} in the critical case $\Lambda(n,p,q)=0$ for $p\neq q$.

\subsection*{Case $p=q$} \label{Subsection crit p = q}

In this section we consider the case $\Lambda(n,p,q)=0=\Lambda(n,q,p)$. In particular, we have $p=q$. Moreover, the condition $\Lambda(n,p,p)=0$ is satisfied if and only if $p=p_{\Gla}(n)=\frac{n+1}{n-1}$. This implies that the powers of $(1+s)$ in the right hand sides of \eqref{fundamental estimate F'} and \eqref{fundamental estimate G'} are exactly $-1$. Therefore, up to a not relevant modification of the multiplicative constants, we may reformulate the integral version of \eqref{fundamental estimate F'} and \eqref{fundamental estimate G'} as follows:
\begin{align}
\mathcal{F}(t) &\geqslant \mathcal{F}(0) +C \int_1^t  \frac{(\mathcal{G}(s))^p}{s} \, ds \label{fundamental estimate F p=q},\\
\mathcal{G}(t) &\geqslant \mathcal{G}(0)+K  \int_1^t\frac{(\mathcal{F}(s))^q}{s} \, ds. \label{fundamental estimate G p=q}
\end{align}

Due to the particular structure of the iteration frame given by \eqref{fundamental estimate F p=q}, \eqref{fundamental estimate G p=q}, it is not necessary to slice the time interval in this case. As in the previous section, the first step is to prove the lower bound estimates
\begin{align}\label{lower bound mathcalF j p = q}
\mathcal{F}(t)\geqslant C_j \big(\log t \big)^{a_j} \qquad \mbox{for any} \ t\geqslant 1  \  \mbox{and} \ \mbox{for any} \ j\in \mathbb{N},
\end{align} where $\{a_j\}_{j\in\mathbb{N}}$ and $\{C_j\}_{j\in\mathbb{N}}$ are sequences of nonnegative numbers that we will determine throughout the proof. According to \eqref{fundamental estimate F p=q}, we see that \eqref{lower bound mathcalF j p = q} is true for $j=0$ provided that $a_0\doteq 0$ and $C_0\doteq \varepsilon I_1[u_1]$. Let us prove now the inductive step. Plugging the lower bound \eqref{lower bound mathcalF j p = q} in \eqref{fundamental estimate G p=q}, we find for $t\geqslant 1$
\begin{align*}
\mathcal{G}(t) &\geqslant KC_j^q  \int_1^t\frac{\big(\log s \big)^{a_jq} }{s}   \, ds = KC_j^q  (a_jq+1)^{-1} \big(\log t \big)^{a_jq+1}.
\end{align*} Combining the last inequality with \eqref{fundamental estimate F p=q}, we have
\begin{align*}
\mathcal{F}(t) &\geqslant C  K^pC_j^{pq}  (a_jq+1)^{-p} \int_1^t  \frac{ \big(\log s \big)^{a_jpq+p}}{s} \, ds \\ &=  C  K^pC_j^{pq}  (a_jq+1)^{-p}  (a_jpq+p+1)^{-1}\big(\log t \big)^{a_jpq+p+1}.
\end{align*} Also, we proved \eqref{lower bound mathcalF j p = q} for $j+1$, provided that 
$$a_{j+1}\doteq a_j pq+p+1 \quad \mbox{and} \quad C_{j+1}\doteq  C  K^pC_j^{pq}  (a_jq+1)^{-p}  (a_jpq+p+1)^{-1}.$$ By iteration, we arrive at
\begin{align}
a_j=(p+1)\sum_{k=0}^{j-1} (pq)^k +(pq)^j a_0= \frac{p+1}{pq-1}\big((pq)^j-1\big)= \frac{(pq)^j-1}{p-1}. \label{explicit expression aj p = q}
\end{align}

By \eqref{explicit expression aj p = q} we derive, in particular, 
 \begin{align*}
 a_{j-1} pq+p+1 &=a_j\leqslant (p-1)^{-1} (pq)^j, \\
 a_{j-1}q+1 &= \tfrac{(pq)^j}{p(p-1)}-\tfrac{1}{p-1}\leqslant (p(p-1))^{-1}(pq)^j.
 \end{align*} The above estimates imply 
\begin{align} \label{lower bound Cj p = q}
C_j \geqslant N \big((pq)^{p+1}\big)^{-j} C_{j-1}^{pq},
\end{align} 
where $N\doteq  C K^p  (p-1)^{p+1}p^p$. Applying the logarithm function to both sides of \eqref{lower bound Cj p = q} and using in an iterative way the resulting relation, we have
\begin{align}
\log C_j  & \geqslant pq \log C_{j-1}  -j\log (pq)^{p+1}+\log N \notag \\
& \geqslant (pq)^2 \log C_{j-2}  -(j+(j-1)pq)\log (pq)^{p+1}+(1+pq)\log N \notag \\
& \geqslant \cdots \geqslant (pq)^j \log C_{0}  -\sum_{k=0}^{j-1} (j-k)(pq)^k\log (pq)^{p+1}+\sum_{k=0}^{j-1}(pq)^k \log N \notag \\
&= (pq)^j \bigg(\log C_0-\frac{pq}{(pq-1)^2}\log(pq)^{p+1}+\frac{\log N}{pq-1}\bigg) \notag \\ & \quad +(j+1)\,\frac{\log(pq)^{p+1}}{pq-1}+\frac{\log(pq)^{p+1}}{(pq-1)^2}-\frac{\log N}{pq-1}, \label{lower bound log Cj p = q}
\end{align} where we used again \eqref{sum formulas}.

Thus, for $j\geqslant j_0\doteq \lceil \frac{\log N}{\log (pq)^{p+1}}-1-\frac{1}{pq-1} \rceil$ by \eqref{lower bound log Cj p = q} we get 
\begin{align}
\log C_j  & \geqslant  (pq)^j \bigg(\log C_0-\frac{pq}{(pq-1)^2}\log(pq)^{p+1}+\frac{\log N}{pq-1}\bigg)\notag \\ &= (pq)^j \log (D \varepsilon), \label{lower bound log Cj p = q 2}
\end{align} where $D\doteq (pq)^{-\frac{pq(p+1)}{(pq-1)^2}}N^{\frac{1}{pq-1}}I_1[u_1]$. Then, using together \eqref{lower bound mathcalF j p = q}, \eqref{explicit expression aj p = q} and \eqref{lower bound log Cj p = q 2},  $t\geqslant 1$ it holds
\begin{align}
\mathcal{F}(t) & \geqslant \exp\big((pq)^j \log(D\varepsilon)\big) \big(\log t \big)^{a_j}  \notag  \\ &= \exp\Big((pq)^j \log\Big(D\varepsilon \, \big(\log t\big)^{\frac{1}{p-1}}\Big)\Big) \big(\log t \big)^{-\frac{1}{p-1}}. \label{lower bound mathcalF final one p=q}
\end{align} 

In conclusion, we can find $\varepsilon_0>0$ sufficiently small, so that $$\exp\left(D^{-p+1}\varepsilon_0^{-(p-1)}\right)\geqslant 1.$$ So, for any $\varepsilon\in (0,\varepsilon_0]$ and for $t >\exp\left(D^{-p+1}\varepsilon^{-(p-1)}\right) $ we have $t\geqslant 1$ and $$\log\Big(D\varepsilon \, \big(\log t\big)^{\frac{1}{p-1}}\Big)>0.$$
 However, taking the limit as $j\to \infty$ in \eqref{lower bound mathcalF final one p=q}, we find that the lower bound of $\mathcal{F}(t)$ blows up.
  Hence, $\mathcal{F}(t)$ can be  finite only for $t\leqslant \exp\left(D^{-p+1}\varepsilon^{-(p-1)}\right)$. This is precisely \eqref{lifespan upper bound estimate} in the critical case $p= q=p_{\Gla}(n)$.

\subsection*{Acknowledgment}

The first author is member of the Gruppo Nazionale per L'Analisi Matematica, la Probabilit\`{a} e le loro Applicazioni (GNAMPA) of the Instituto Nazionale di Alta Matematica (INdAM). This paper was written partially during the stay of the first author at Tohoku University within the period October to December 2018. He thanks the Mathematical Department of Tohoku University for the hospitality and the great working conditions during his stay. The second author is partially supported by the Grant-in-Aid for Scientific Research (B)(No.18H01132).


\end{document}